\documentclass[11pt,reqno, english]{amsart}
\usepackage[a4paper]{geometry}
\usepackage{amsmath,amssymb,amscd,amsthm,amsfonts}
\usepackage{graphicx,subcaption}
\usepackage{hyperref}
\usepackage{dsfont}
\usepackage{xcolor}
\usepackage{color,soul}
\usepackage[utf8]{inputenc}
\usepackage[nobysame, alphabetic]{amsrefs}
\usepackage{tikz}
\usepackage[capitalise]{cleveref}

\newtheorem{theorem}{Theorem}[section]
\newtheorem{corollary}[theorem]{Corollary}
\newtheorem{lemma}[theorem]{Lemma}

\theoremstyle{definition}

\newtheorem{definition}[theorem]{Definition}

\newtheorem{claim}[theorem]{Claim}

\newcommand{\rr}{\mathds{R}}

\newcommand{\ff}{\mathcal{F}}
\DeclareMathOperator{\supp}{supp}

\DeclareMathOperator{\conv}{conv}
\DeclareMathOperator{\aff}{aff}

\title[Variations of Helly's theorem for convex splinters]{Variations of Helly's theorem for convex splinters}

\author[Sober\'on]{Pablo Sober\'on}\address{Baruch College \& The Graduate Center, City University of New York, New York, NY 10010} 
\email{psoberon@gc.cuny.edu}

\author[Ye]{Iris Ye}\address{The University of Chicago Booth School of Business, Chicago, IL 60637} 
\email{iris.ye@chicagobooth.edu}

\subjclass{52A35}

\keywords{Helly-type theorems; fractional Helly theorem; colorful Helly theorem; Tverberg theorem; combinatorial convexity}

\thanks{The research of P. Sober\'on was supported by NSF CAREER award no. 2237324 and a PSC-CUNY Trad B award.  This project was conducted as part of the 2024 NYC Discrete Math REU, funded by NSF awards DMS-2051026 and DMS-2349366 and by Jane Street.}

\begin{document}

\begin{abstract}
A convex splinter $K$ is a union of convex sets in $\rr^d$ such that every minimal affine dependent set of $\rr^d$ contained in $K$ is contained in one of the sets.  The study of convex splinters was motivated by the study of flat transversals to convex sets.  We extend several variations of Helly's theorem from convex geometry to convex splinters.  These include fractional and colorful variations of Helly's theorem.  We also extend Tverberg's theorem to convex splinters. 
\end{abstract}

\maketitle

\section{Introduction}

In combinatorial    geometry, the intersection patterns of convex sets is a widely studied subject \cites{Amenta2017, DeLoera2019, Holmsen:2017uf}.  Consider, for example, Helly's theorem, which states that \textit{a finite family of convex sets has a non-empty intersection if and only if every subfamily of at most $d+1$ sets has a non-empty intersection} \cite{Helly:1923wr}.  However, there are other operators similar to the convex hull that also induce interesting combinatorial properties (see, e.g., \cites{Eckhoff:1988eo, Soberon2021, Pirahmad2024}).  In this manuscript, our goal is to extend results from convexity to convex splinters.

\begin{definition}
    A convex splinter of $\rr^d$ is the union of a finite family $K_1,\dots, K_s$ of convex sets of $\rr^d$ so that for any choice $U_1 \subset K_1, \dots, U_s \subset K_s$ of sets of points which are affinely independent, their union $\bigcup_{k=1}^s U_k$ is also affinely independent.
\end{definition}

Convex splinters were introduced by Arocha, Bracho, and Montejano \cites{Arocha2007} as they appear naturally when studying flat transversals to convex sets.  Arocha, Bracho, and Montejano called them \textit{convex partitions}.  We choose a different name to avoid confusion as the term convex partition is commonly used in the context of mass partitions.  The following Helly-type theorem for convex splinters was proved by Arocha et al.

\begin{theorem}[Arocha, Bracho, Montejano 2007 \cite{Arocha2007}]\label{thm:arocha-helly}
    Let \(\ff\) be a finite family of convex splinters in $\rr^d$.  If every \(2d+1\) or fewer sets in \(\ff\) have a non-empty intersection, then the entire family \(\ff\) has a non-empty intersection.
\end{theorem}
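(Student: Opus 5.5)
The plan is to argue by contradiction, as in the Radon-type proof of Helly's theorem. First I will record the basic structural fact about a splinter $K=K_1\cup\dots\cup K_s$. If $C\subset K$ is a minimal affinely dependent set (a circuit), then $C$ lies in a single $K_i$, and therefore $\conv C\subset K_i\subset K$. This is exactly the definition read in contrapositive: if the sets $C\cap K_i$ were spread over several pieces, each of them would be a proper subset of $C$, hence independent, and their union $C$ would then have to be independent. Consequently, for any Radon partition $C=A\cup B$ of a circuit, the Radon point $r\in\conv A\cap\conv B$ lies in $K$. So circuits play, for splinters, the role that arbitrary finite sets play for convex sets.

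Now suppose $\ff=\{F_1,\dots,F_n\}$ is a counterexample with $n$ minimal. Then $n\ge 2d+2$, every $n-1$ members intersect, and the whole family does not. For each $j$ I will pick $x_j\in\bigcap_{i\neq j}F_i$, so that $F_j\supseteq X_j:=\{x_i : i\ne j\}$. The goal is to find a point lying in every $F_j$. Take a circuit $C\subset\{x_1,\dots,x_{d+2}\}$; one exists since any $d+2$ points of $\rr^d$ are affinely dependent. Let $r$ be its Radon point. For every $j$ with $x_j\notin C$ we have $C\subset F_j$, so $r\in F_j$ by the fact above. The remaining indices $j$ with $x_j\in C$ are the difficulty, and this is where the extra $d$ points beyond the first $d+2$ must be used.

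For $j$ with $x_j\in C$, the set $X_j$ has at least $2d+1$ points of $\rr^d$. Its affine matroid has rank at most $d+1$, and $C\setminus\{x_j\}$ is independent. The plan is to exploit the partition of $X_j$ induced by the pieces of $F_j$. By the definition of a splinter, the ranks of the parts $X_j\cap K_i$ add up to the rank of $X_j$. So the pieces cut $X_j$ into a union of connected components of its matroid, i.e., a direct-sum decomposition. I then want to show that if two points $x_a,x_b\in C\setminus\{x_j\}$ lay in different pieces of $F_j$, then $C$ itself would split as a direct sum, contradicting that $C$ is a circuit. Two routes seem plausible.

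\begin{itemize}
\item The first is to transfer the decomposition of $X_j$ to the full set $\{x_1,\dots,x_n\}$. The sets $X_j$ and $X_{j'}$ share all but two points, so their decompositions should be compatible.
\item The second is to choose the circuit $C$ more cleverly. For example, take $C$ inside a connected component common to all the decompositions. Such a choice is possible by a counting argument: the components of each $X_j$ have total rank at most $d+1$ while $|X_j|\ge 2d+1$, so some component has more points than its rank. This forces a circuit within it, and the circuit persists after deleting one point.
\end{itemize}

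Once all of $C$ is known to lie in one piece of each $F_j$, convexity of that piece gives $r\in F_j$ for every $j$, which is the desired contradiction.

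The main obstacle is this last step: the containment $C\setminus\{x_j\}\subset F_j$ alone does not force $r\in F_j$, since an independent set can straddle several pieces. The bound $2d+1$ must enter precisely through the counting that produces a circuit inside a single component, and that circuit must avoid being destroyed by removing any one point. I will first try to make the counting work with $d+1$ points of one component plus one redundant point. This gives $d+2$ points of rank $d+1$ forming a circuit. I will then check that $2d+1$ points are exactly what is needed for such a component to survive the deletion of $x_j$ for every $j$ at once.
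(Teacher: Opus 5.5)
Your proposal does not prove the decisive step, and the arguments you sketch for it do not work. Your first route relies on this claim: ``if two points of $C\setminus\{x_j\}$ lie in different pieces of $F_j$, then $C$ splits as a direct sum.'' That claim is false. The point $x_j$ need not lie in $F_j$, so the pieces of $F_j$ tell you nothing about $C$ itself. For the same reason, choosing $C$ inside $\{x_1,\dots,x_{d+2}\}$ can fail.

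Here is an example. Take $d=2$ and set $x_1=(0,0)$, $x_2=(4,0)$, $x_3=(2,2)$, $x_4=(2,1)$, $x_5=(1,0)$, $x_6=(3,0)$. Let $F_4=(\rr\times\{0\})\cup\{x_3\}$, a line plus a point off it, which is a convex splinter. Let $F_j=\rr^2$ for $j\neq 4$. Then every $x_j$ lies in $\bigcap_{i\neq j}F_i$. No three of $x_1,\dots,x_4$ are collinear, so the only circuit among them is $C=\{x_1,\dots,x_4\}$. Its Radon point is $x_4=\tfrac14x_1+\tfrac14x_2+\tfrac12x_3$, and $x_4\notin F_4$, even though $C\setminus\{x_4\}$ meets both pieces of $F_4$.

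The counting in your second route does not fix this. It only produces \emph{some} circuit inside a component with more points than rank. Here all six points form one connected configuration of rank $3$, so the bad circuit above qualifies. Two further claims are also wrong: $d+2$ points of rank $d+1$ need not form a circuit, and a circuit cannot ``persist'' after one of its own points is deleted. What you actually need is a circuit $C$ such that, for every $x\in C$, deleting $x$ does not disconnect the component of the affine matroid of $\{x_1,\dots,x_n\}$ that contains $C$. In the example, three points on the axis form such a circuit.

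Such a circuit does exist, so your route can be completed, but it is the heart of the argument and it is missing. Here is a sketch.
\begin{itemize}
\item The components of the matroid have ranks summing to at most $d+1$ and sizes summing to $n\ge 2d+2$. So some component $E$ satisfies $|E|\ge 2\operatorname{rank}E$.
\item If no point disconnects $E$, any circuit of $E$ works.
\item Otherwise, suppose $E\setminus\{x\}=A_1\oplus\dots\oplus A_k$ with $k\ge 2$. Then $\sum_i(|A_i|-2\operatorname{rank}A_i)\ge -1$, which forces $|A_i|\ge 2\operatorname{rank}A_i$ for some $i$. Recurse into $A_i$.
\item A circuit that is good in $A_i$ stays good in $E$. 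For $y$ in it, $A_i\setminus\{y\}$ is connected and $\operatorname{cl}(A_i\setminus\{y\})=\operatorname{cl}(A_i)$. So $x$ still lies in a circuit meeting every piece, and $E\setminus\{y\}$ is connected.
\end{itemize}
With such a $C$, take any $x_j\in C$. The set $C\setminus\{x_j\}$ lies in one component of the matroid of $X_j$, hence in one convex piece of $F_j$. The Radon point lies in $\conv(C\setminus\{x_j\})$, so it lies in $F_j$. This is exactly where the bound $2d+2$ is used.

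For comparison, the paper obtains this theorem as the case $\ff_1=\dots=\ff_{2d+1}$ of \cref{thm:colorful-cs}, which it proves by induction on $\rho$. That proof restricts to the components of a disconnected ambient splinter, or restricts the family to a disconnected member $C$ with $\rho(C)\le 2n$. It falls back on classical Helly when every set is convex, and never has to exhibit the common point.
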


The parameter $2d+1$ is optimal.  This is often called the \textit{Helly number} for convex splinters.  Our goal is to extend other results in combinatorial geometry to convex splinters.  Moreover, convex splinters provide a nice geometric setting in which the parameters for the analogues of classic results behave very differently.

We focus on two generalizations of Helly's theorem.  The first is the fractional version by Katchalski and Liu \cite{Katchalski1979} and the second is Lov\'asz's colorful version of Helly's theorem \cite{Barany1982}.  The first studies families of convex sets in which many $(d+1)$-tuples intersect, while the second studies families of convex sets in which certain kinds of $(d+1)$-tuples are required to intersect.  Both are different ways to weaken the condition of Helly's theorem.

There is already a common generalization of the colorful and fractional Helly theorem by B\'ar\'any, Fodor, Montejano, Oliveros, and P\'or \cite{Barany2014}.  Our first result is a version of their result for convex splinters.

\begin{theorem}\label{thm:colorful-fractional-cs}
Let \(d\ge 0\) and let \(\alpha \in (0,1]\). There exists
\(\beta=\beta(\alpha,d)>0\) such that the following holds.  Let \(\ff_1,\ldots,\ff_{d+1}\)
be finite families of convex splinters in \(\rr^d\). Suppose that at least
\(\alpha\prod_{i=1}^{d+1}|\ff_i|\)
of the colorful choices
\( C_i\in\ff_i\), for \(i=1,\ldots,d+1,\)
have non-empty intersection. Then there exists an index \(i\in[d+1]\) and a subfamily
\( \mathcal{G}_i\subseteq\ff_i\) with
\(|\mathcal{G}_i|\ge \beta|\ff_i|\)
such that \( \bigcap_{C\in\mathcal G_i}C\neq\emptyset\).
\end{theorem}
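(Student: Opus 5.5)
Note that a colorful choice here has only $d+1$ members, while the Helly number of convex splinters is $2d+1$ by \cref{thm:arocha-helly}; so we cannot hope to run the usual argument with the splinters as a black box. Instead we reduce to the classical colorful fractional Helly theorem of B\'ar\'any, Fodor, Montejano, Oliveros and P\'or for convex sets.

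Write each splinter $C$ as a union of convex pieces $K^C_1,\dots,K^C_{s_C}$. If a colorful tuple $C_1,\dots,C_{d+1}$ has a common point $x$, then for each $i$ we may pick a piece $K^{C_i}_{j_i}$ containing $x$. The colorful tuple of pieces $(K^{C_1}_{j_1},\dots,K^{C_{d+1}}_{j_{d+1}})$ therefore intersects. The difficulty is that a splinter may have arbitrarily many pieces, so we cannot simply take all pieces as the new families and keep a positive fraction $\alpha$.

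\textbf{Step 1: bound the number of relevant pieces.} First I would show that in $\rr^d$ only boundedly many pieces of a splinter can matter. Suppose pieces $K_1,\dots,K_m$ are nonempty and we pick points $p_j\in K_j$. Singletons are affinely independent, so the defining property forces $\{p_1,\dots,p_m\}$ to be affinely independent. Hence $m\le d+1$. So every convex splinter in $\rr^d$ is a union of at most $d+1$ nonempty convex sets.

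\textbf{Step 2: apply the classical theorem.} Given this, for each splinter $C$ fix an ordering of its at most $d+1$ pieces, padding with empty sets so there are exactly $d+1$. Each successful colorful tuple of splinters admits a successful choice of piece indices $(j_1,\dots,j_{d+1})\in[d+1]^{d+1}$. By pigeonhole, some fixed index vector $(j_1,\dots,j_{d+1})$ serves at least $\alpha(d+1)^{-(d+1)}\prod|\ff_i|$ tuples.

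Define $\ff_i'=\{K^C_{j_i}: C\in\ff_i\}$, with multiplicity, so that $|\ff_i'|=|\ff_i|$. Then at least an $\alpha'=\alpha(d+1)^{-(d+1)}$ fraction of colorful tuples of $\ff'_1,\dots,\ff'_{d+1}$ intersect. The colorful fractional Helly theorem of \cite{Barany2014} for convex sets gives an index $i$ and a subfamily of $\ff_i'$ of size at least $\beta(\alpha',d)|\ff_i|$ with a common point. The corresponding splinters contain those pieces, so they intersect as well, and we set $\beta(\alpha,d)=\beta_{\mathrm{BFMOP}}(\alpha(d+1)^{-(d+1)},d)$.

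\textbf{Obstacle.} The main obstacle is whether the theorem of \cite{Barany2014} handles multisets and empty members. Empty sets never occur in an intersecting tuple, so they are harmless. Repeated sets can be perturbed away or handled by the standard proof, which works for multisets. The only genuine geometric input is Step 1, the bound of at most $d+1$ pieces per splinter.
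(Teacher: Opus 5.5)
Your proof is correct, and it takes a genuinely different and considerably shorter route than the paper's. The paper proves a relative statement by induction on $n$: splinters inside an $n$-dimensional convex set $K$, with any $t\ge n+1$ colors.
\begin{itemize}
\item If many intersecting colorful tuples consist only of connected members, it applies the theorem of B\'ar\'any et al.\ to those members directly.
\item Otherwise, averaging isolates one disconnected splinter $C$ in some color $i_0$ that lies in many intersecting tuples. The remaining $t-1$ colors are restricted to $C$ and then to a single component $K_j$, and induction is applied inside $K_j$. This works because $\dim K_j\le n-1$, which follows from $\sum_j(n_j+1)\le n+1$ and $s\ge 2$.
\end{itemize}

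You use only the coarse consequence that a splinter has at most $d+1$ pieces. A single pigeonhole over the $(d+1)^{d+1}$ index vectors then reduces everything to convex sets in one step. This buys two things:
\begin{itemize}
\item It shows the result holds for any family of sets that are unions of at most $k$ convex sets, with $\beta$ depending on $k$. So the general-position axiom matters only through the piece count.
\item It gives the explicit constant $\beta_{\mathrm{BFMOP}}(\alpha(d+1)^{-(d+1)},d)$. The paper's recursion instead divides $\alpha$ by $2t(n+1)$ at each dimension step and picks up an extra factor $\alpha/2$ in the all-connected case.
\end{itemize}
The paper's induction exploits the dimension drop of the components, which is the same mechanism behind its colorful Helly proof via $\rho$. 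There piece counting alone cannot suffice, since unions of two intervals already have unbounded Helly number. For a fractional statement with an unspecified $\beta$, your piece count is enough.

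Two small points to tighten.
\begin{itemize}
\item In Step 1 you need the $p_j$ to be distinct. After discarding repeated pieces, distinct pieces are disjoint. Indeed, if $x\in K_1\cap K_2$ and $y\in K_1\setminus\{x\}$, then $U_1=\{y,(x+y)/2\}\subset K_1$ and $U_2=\{x\}\subset K_2$ are each affinely independent, but their union is three distinct collinear points, hence affinely dependent.
\item For the empty pieces, deleting them keeps every intersecting tuple. The product of the new sizes is still at least $\alpha'\prod_i|\ff_i|$, so each new size is at least $\alpha'|\ff_i|$, and you lose at most a factor $\alpha'$ in $\beta$. This is exactly the device the paper uses with $M_i\ge\frac{\alpha}{2}N_i$.
\end{itemize}
Multisets cause no trouble; the paper itself applies its induction hypothesis to indexed families such as the restricted $\mathcal H_k$.
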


Since convex sets are also convex splinters \cref{thm:colorful-fractional-cs} implies the fractional colorful Helly theorem of B\'ar\'any et al.  Our proof relies on bootstrapping B\'ar\'any et al.'s theorem to cover all convex splinters.

\begin{corollary}[Fractional Helly for convex splinters]\label{coro:frac-splinters}
    Let $d \ge 0, \alpha>0$.  There exists $\beta' = \beta'(\alpha,d)>0$ such that the following holds.  Let $\ff$ be a finite family of convex splinters in $\rr^d$.  Suppose that at least $\alpha \binom{|\ff|}{d+1}$ of the $(d+1)$-tuples of $\ff$ have non-empty intersection.  Then, there exists a subfamily $\mathcal{G} \subset \ff$ with $|\mathcal{G}|\ge \beta'|\ff|$ that has non-empty intersection.
\end{corollary}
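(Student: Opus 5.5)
We deduce \cref{coro:frac-splinters} from \cref{thm:colorful-fractional-cs} by the standard random-colouring reduction, taking all colour classes from the same family. Set $n=|\ff|$. For $n$ small compared with $d$ the statement is trivial: a single set (nonempty, since some $(d+1)$-tuple intersects) is a subfamily of size $1\ge \beta' n$ once $\beta'\le 1/n$. So assume $n\ge 2(d+1)$ and fix a partition $\ff=\ff_1\cup\dots\cup\ff_{d+1}$ into parts whose sizes differ by at most one, so that $|\ff_i|\ge \lfloor n/(d+1)\rfloor\ge n/(2(d+1))$.

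The key step is an averaging argument. Choose the balanced partition uniformly at random among all balanced partitions. Fix an intersecting $(d+1)$-tuple $\{C_1,\dots,C_{d+1}\}$. The probability that it is \emph{rainbow}, meaning its members lie in distinct parts, is
\[
\frac{(d+1)!\prod_i|\ff_i|}{n(n-1)\cdots(n-d)}\ \ge\ \frac{(d+1)!\prod_i|\ff_i|}{n^{d+1}}\ \ge\ c_d>0,
\]
where $c_d$ depends only on $d$ (for instance $c_d=(d+1)!/(2(d+1))^{d+1}$). Hence the expected number of rainbow intersecting tuples is at least $c_d\alpha\binom{n}{d+1}$, and some partition attains at least this many.

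Each rainbow tuple corresponds to exactly one colorful choice $C_i\in\ff_i$. Also $\binom{n}{d+1}\ge \frac{1}{(d+1)!}\prod_i|\ff_i|$, because every colorful choice is a distinct $(d+1)$-subset of $\ff$. Therefore at least $\alpha'\prod_i|\ff_i|$ colorful choices intersect, with $\alpha'=c_d\alpha/(d+1)!>0$ depending only on $\alpha$ and $d$.

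Applying \cref{thm:colorful-fractional-cs} with $\alpha'$ gives an index $i$ and a subfamily $\mathcal G\subseteq\ff_i\subseteq\ff$ with nonempty intersection and $|\mathcal G|\ge\beta(\alpha',d)|\ff_i|\ge \frac{\beta(\alpha',d)}{2(d+1)}n$. Taking
\[
\beta'=\min\left\{\frac{\beta(\alpha',d)}{2(d+1)},\ \frac{1}{2(d+1)}\right\}
\]
also covers the small-$n$ case.

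The only delicate point is keeping every constant independent of $n$. This is why we use a balanced partition rather than an arbitrary one: it guarantees each $|\ff_i|$ is a constant fraction of $n$, and we must check the rainbow probability bound above without hiding any dependence on $n$. Everything else is routine.
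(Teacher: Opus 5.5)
Your argument is correct. Where it differs from the paper is only in how you reduce to \cref{thm:colorful-fractional-cs}.

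The paper writes out no proof of the corollary; it presents it as an immediate consequence of \cref{thm:colorful-fractional-cs}. The natural reduction, the same device the paper uses in the optimality remark at the end of the proof of \cref{thm:colorful-cs}, is to take all color classes equal, $\ff_1=\dots=\ff_{d+1}=\ff$. The colorful choices are then the $n^{d+1}$ ordered tuples, and each intersecting $(d+1)$-subset contributes $(d+1)!$ intersecting ordered tuples. Since $n(n-1)\cdots(n-d)/n^{d+1}$ is increasing in $n$, at least $\alpha\, n(n-1)\cdots(n-d)\ge \alpha\frac{(d+1)!}{(d+1)^{d+1}}\,n^{d+1}$ colorful choices intersect for every $n\ge d+1$. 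The theorem then returns an intersecting subfamily of $\ff_i=\ff$ of size at least $\beta n$ directly.

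That route needs no partition, no averaging, and no separate small-$n$ case. It also avoids the factor $\frac{1}{2(d+1)}$ you lose in $\beta'$.

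Your balanced random partition costs these extra steps, but it only ever feeds the theorem pairwise disjoint color classes. So it does not depend on the colorful statement, or the B\'ar\'any et al.\ theorem it is bootstrapped from, remaining valid when color classes coincide. The equal-classes reduction uses that tacitly; it does hold, since nothing in those proofs uses disjointness, but your version need not check it.

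A minor remark: the injection you describe already gives $\binom{n}{d+1}\ge\prod_i|\ff_i|$. So the extra factor $1/(d+1)!$ in your $\alpha'$ is unnecessary, though harmless.
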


The size of the families we need to check, $d+1$, is optimal.  This is known as the fractional Helly number.  Of course, since the Arocha--Bracho--Montejano Helly theorem for convex splinters is optimal, we cannot hope to obtain \(\beta \to 1\) as \(\alpha \to 1\).  

There are now several known Helly-type theorems for which the fractional Helly number is smaller than the corresponding Helly number.  The first example was proven by B\'ar\'any and Matou\v{s}ek for the integer lattice in $\rr^d$ \cite{Barany2003}, where the Helly number is $2^d$ by Doignon's theorem \cites{Doignon1973, Scarf1977, Bell1976/77} but the fractional Helly number is $d+1$.  Similar statements have been proven for \(S\)-Helly numbers \cite{Averkov2012}, discrete Helly for boxes \cites{Edwards2025, Gangopadhyay2025}, and the volumetric Helly theorem \cite{Frankl2025}. Unlike all the examples listed, the proof of \cref{thm:colorful-fractional-cs} does not rely on a modification of the B\'ar\'any--Matou\v{s}ek technique.  We provide another example of this in our discussion of linear partitions in \cref{sec:linear-partitions}.

We also prove a colorful Helly theorem for convex splinters, without the fractional component.  The colorful Helly number we prove is $2d+1$.  This is optimal, as it generalizes \cref{thm:arocha-helly}.

\begin{theorem}[Colorful Helly for convex splinters]\label{thm:colorful-cs}
    Let $\ff_1,\dots,\ff_{2d+1}$ be finite families of convex splinters in $\rr^d$.  If every choice $C_1 \in \ff_1,\dots, C_{2d+1}\in \ff_{2d+1}$ makes an intersecting $(2d+1)$-tuple, then there exists an index $i \in [2d+1]$ such that $\cap \ff_i \neq \emptyset$.
\end{theorem}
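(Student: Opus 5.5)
The plan is to adapt Lov\'asz's lexicographic-minimum proof of the colorful Helly theorem, with \cref{thm:arocha-helly} playing the role of the Helly number $2d+1$. The key tool is an ``LP-type'' lemma: \emph{if $\mathcal{A}$ is a finite family of convex splinters with non-empty compact intersection, and $m$ is the lexicographic minimum of $\bigcap \mathcal{A}$, then there is a subfamily $\mathcal{B}\subseteq\mathcal{A}$ with $|\mathcal{B}|\le 2d$ such that $m$ is also the lexicographic minimum of $\bigcap\mathcal{B}$.}

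\textbf{Reduction to compact sets.} First I would reduce to the case where every piece of every splinter is compact. For each colorful $(2d+1)$-tuple, fix a witness point of its intersection, and note which piece of each splinter contains it. Then replace every piece $K$ of every splinter by the convex hull of the finitely many witness points it contains. Subsets of pieces of a splinter again form a splinter. All hypotheses are preserved, and a common point of the new family $\ff_i$ is a common point of the old one.

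I would also check that intersections of convex splinters are convex splinters. If $K=\bigcup K_a$ and $L=\bigcup L_b$, then $K\cap L=\bigcup_{a,b}(K_a\cap L_b)$. Take affinely independent sets chosen from the pieces $K_a\cap L_b$. Grouping them by $b$, each group is a union of independent sets taken from distinct pieces $L_b$, so it is independent. Regrouping by $a$ and using the splinter property of $K$ shows the whole union is independent. Hence every non-empty finite intersection is a finite union of compact convex sets, and its lexicographic minimum exists.

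\textbf{Proof of the lemma.} Let $H=\{x: x<_{\mathrm{lex}} m\}$. This set is convex (though not closed), so it is a convex splinter with one piece. The family $\mathcal{A}\cup\{H\}$ has empty intersection, so by \cref{thm:arocha-helly} some subfamily of at most $2d+1$ members has empty intersection. This subfamily must contain $H$, since $\bigcap\mathcal{A}\neq\emptyset$. So it is $\{H\}\cup\mathcal{B}$ with $|\mathcal{B}|\le 2d$. Then $\bigcap\mathcal{B}$ contains $m$ and contains no point lexicographically below $m$, so its minimum is exactly $m$. The step I would double-check is that \cref{thm:arocha-helly} applies to non-closed convex pieces such as $H$. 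It is stated for arbitrary finite families, so this should be fine. Otherwise one could replace $H$ by a closed halfspace slightly below $m$, using a small perturbation argument.

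\textbf{Lov\'asz argument.} Consider all colorful $2d$-tuples, namely choices of one set from each of $2d$ of the $2d+1$ colors. Each such tuple is intersecting, since it extends to a colorful $(2d+1)$-tuple. Among them, pick one, $\mathcal{B}_0$, whose intersection has the lexicographically largest minimum $m$, and let $i$ be its missing color. I claim $m\in\bigcap\ff_i$.

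Suppose instead that $m\notin C$ for some $C\in\ff_i$. Then $\mathcal{B}_0\cup\{C\}$ is a colorful $(2d+1)$-tuple, so it is intersecting by hypothesis. Its minimum $m'$ satisfies $m'\ge_{\mathrm{lex}} m$, and $m'\neq m$ because $m\notin C$, so $m'>_{\mathrm{lex}} m$. By the lemma, $m'$ is the minimum of the intersection of at most $2d$ of these sets. Extending that subfamily to a colorful $2d$-tuple can only raise or keep the minimum, and the extension remains intersecting. This gives a colorful $2d$-tuple with minimum at least $m'>_{\mathrm{lex}} m$, contradicting the choice of $\mathcal{B}_0$.

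I expect the main obstacle to be the justification of the lemma, specifically the existence of minima and the use of non-closed $H$, rather than the combinatorial argument, which transfers verbatim.
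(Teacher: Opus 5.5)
Your proof is correct, and it takes a genuinely different route from the paper's.

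You treat \cref{thm:arocha-helly} as a black box and run Lov\'asz's lexicographic-minimum argument. The key observation is that the lexicographic half-space $H=\{x: x<_{\mathrm{lex}} m\}$ is convex, hence a one-piece splinter. Adding it to the family, Arocha--Bracho--Montejano gives an LP-type lemma of combinatorial dimension $2d$, and your witness-point compactification guarantees that the lexicographic minima exist.

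The paper uses \cref{thm:arocha-helly} only for optimality, not in the proof. It proves a relative statement for splinters contained in a splinter $A$, by induction on $\rho(A)=\sum_j(2\dim K_j+1)$:
\begin{itemize}
\item If $A$ is connected and all members are convex, it applies the classical colorful Helly theorem.
\item Otherwise it restricts the other colors to a disconnected member $C$, which satisfies $\rho(C)\le 2n$.
\item If $A$ is disconnected, it splits the colors into disjoint blocks of sizes $\rho(K_j)$, one block per component.
\end{itemize}

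The paper's route is self-contained. Taking all color classes equal, it reproves \cref{thm:arocha-helly} for arbitrary convex pieces, closed or not, which also settles your worry about applying that theorem to the non-closed $H$. It also gives the sharper relative statement that $\rho(A)$ colors suffice inside $A$.

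Your route is shorter and more general. It is an instance of the principle that a Helly number $h$, together with a lexicographic LP-type structure, yields colorful Helly with $h$ colors. It also identifies the common point explicitly, as the largest lexicographic minimum over colorful $2d$-tuples of the compactified families.

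Two minor remarks. Your paragraph proving that intersections of splinters are splinters is not needed: compactness of finite intersections already gives the minima, and $H$ is added as a separate member of the family. Also, in that paragraph the roles of $K$ and $L$ are swapped in the wording when you group by $b$, though the argument itself is fine.
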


An interesting feature of convex splinters is that the intersection of any family of convex splinters in $\rr^d$ is again a convex splinter.  Additionally, the set $\rr^d$ is itself a convex splinter.  Therefore, we can define the convex splinter hull of a set $A \subset \rr^d$ as
\[
\langle A \rangle_{CS} = \cap \{C \subset \rr^d: A \subset C, C \mbox{ is a convex splinter}\}.
\]

Notice that $\langle A \rangle_{CS} \subset \operatorname{conv} A$.  It becomes natural to translate classic results from convex geometry to convex splinters.  We show that one can prove an analogue of Tverberg's theorem \cite{Tverberg:1966tb}.

\begin{theorem}[Tverberg 1966]
    Let $r,d$ be positive integers. Every set of $(r-1)(d+1)+1$ points in $\rr^d$ has at least one partition into $r$ sets whose convex hulls intersect.
\end{theorem}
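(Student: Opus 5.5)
The plan is to prove Tverberg's theorem by Sarkaria's tensor trick, which reduces it to the colorful Carathéodory theorem in dimension $N=(r-1)(d+1)$. Set $n=(r-1)(d+1)+1=N+1$ and let $x_1,\dots,x_n\in\rr^d$ be the points. Lift each point to $\hat x_i=(x_i,1)\in\rr^{d+1}$. Choose vectors $v_1,\dots,v_r\in\rr^{r-1}$ whose only linear dependence (up to scaling) is $v_1+\dots+v_r=0$; for instance, the vertices of a simplex centred at the origin. Then form, for each $i$, the colour class
\[
S_i=\{\hat x_i\otimes v_j : j=1,\dots,r\}\subset \rr^{d+1}\otimes\rr^{r-1}\cong\rr^{N}.
\]
Since $\sum_j \hat x_i\otimes v_j=\hat x_i\otimes 0=0$, the origin lies in $\conv S_i$ for every $i$.

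Next, apply the colorful Carathéodory theorem of B\'ar\'any, which is classical and dual to Lov\'asz's colorful Helly theorem. It says that if $N+1$ sets in $\rr^N$ each contain the origin in their convex hull, then there is a rainbow choice $\hat x_i\otimes v_{j(i)}$ with the origin in its convex hull. So there are coefficients $\lambda_i\ge 0$, summing to $1$, with
\[
\sum_{i=1}^{n}\lambda_i\,\hat x_i\otimes v_{j(i)}=0 .
\]
Define the parts $I_j=\{i: j(i)=j\}$ and the vectors $w_j=\sum_{i\in I_j}\lambda_i\hat x_i\in\rr^{d+1}$. The equation becomes $\sum_j w_j\otimes v_j=0$.

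The step needing care is to conclude that all the $w_j$ are equal. Apply any linear functional $\phi$ on $\rr^{d+1}$ to the first tensor factor. This gives $\sum_j\phi(w_j)v_j=0$. Because the only dependence among the $v_j$ is the all-ones one, $\phi(w_j)$ is independent of $j$. As this holds for every $\phi$, all the $w_j$ equal a common vector $w$. Reading off the last coordinate, each part has the same total weight $\sum_{i\in I_j}\lambda_i=1/r$, which is in particular positive, so no part is empty. Dividing the first $d$ coordinates by this common weight gives the same point $p=\sum_{i\in I_j} r\lambda_i x_i$ for every $j$. Thus $p\in\conv\{x_i:i\in I_j\}$ for all $j$, and $I_1,\dots,I_r$ is the required Tverberg partition.

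I expect the main obstacle to be justifying colorful Carathéodory itself if it is not taken as known. To prove it, I would take the rainbow simplex closest to the origin and run the standard argument: if the origin is not in it, a separating hyperplane through the nearest point frees a vertex colour, and that vertex can be swapped for a point of the same colour on the origin's side, strictly decreasing the distance, which contradicts minimality. With that in hand, the rest is the linear-algebra bookkeeping above.
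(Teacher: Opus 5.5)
Your argument is correct. It is Sarkaria's tensor-lifting proof, and every step checks out:
\begin{itemize}
\item the dimension count $(d+1)(r-1)=N$ matches the $N+1$ colour classes that colorful Carath\'eodory requires;
\item applying arbitrary functionals $\phi$ to the first factor does force all $w_j$ to coincide;
\item the last coordinate gives each part weight $1/r>0$, so all parts are nonempty.
\end{itemize}
Your sketch of colorful Carath\'eodory is also the standard one. The ``freeing'' of a colour is just Carath\'eodory's theorem applied inside the $(N-1)$-dimensional supporting hyperplane through the nearest point.

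The paper, however, does not prove this statement at all: it quotes Tverberg's theorem from the literature as background. The only related argument in the paper is the proof of \cref{thm:tverberg-lp}, which is a greedy peeling scheme. A centerpoint $p$ (\cref{lem:centerpoint-lp}, obtained from the Helly number) together with a Carath\'eodory bound (\cref{lem:caratheodory-lp}) lets one repeatedly extract small disjoint subsets whose hulls contain $p$. That scheme uses only the Helly number $h$ and the Carath\'eodory number $c$ of the hull operator, and it needs $(r-1)hc+1$ points. Specialized to ordinary convex hulls, it gives only $(r-1)(d+1)^2+1$, so it cannot recover the sharp count you obtain.

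Conversely, your route depends on the linear structure of convex combinations: the identity $\sum_j w_j\otimes v_j=0$ encodes ``equal weighted barycenters.'' This has no counterpart for $\langle\cdot\rangle_{CS}$, whose value is determined by the combinatorics of minimal affine dependencies rather than by a linear condition on coefficients. That is why the paper settles for a weaker bound, which is still linear in $r$, in the splinter setting.
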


Tverberg's theorem is a central part of discrete geometry \cite{Barany2018, DeLoera2019}.  The following is a simple version of Tverberg's theorem for convex splinters.

\begin{theorem}\label{thm:tverberg-lp}
    Let $S$ be a set of $(r-1)(2d+1)^2+1$ points in $\rr^d$.  There exists a partition of $S$ into $r$ sets $A_1,\dots, A_r$ such that
    \[
    \bigcap_{j=1}^r \langle A_j\rangle_{CS}\neq \emptyset.
    \]
\end{theorem}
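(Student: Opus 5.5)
The plan is the classical ``deep point plus Carath\'eodory'' argument for Tverberg-type statements, run inside the convexity space defined by the hull operator $\langle\cdot\rangle_{CS}$. We use the Helly number $2d+1$ from \cref{thm:arocha-helly} twice: once to find a very deep point, and once, via a Carath\'eodory-type bound, to control the size of each part. Write $n=(r-1)(2d+1)^2+1=|S|$ and set $m=(r-1)(2d+1)$.

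\emph{Step 1 (a deep point).} Consider the family of convex splinters
\[
\mathcal{H}=\{\langle S\setminus T\rangle_{CS} : T\subset S,\ |T|=m\}.
\]
These are convex splinters because intersections of convex splinters are convex splinters. Take any $2d+1$ members, with removed sets $T_1,\dots,T_{2d+1}$. Their union has at most $(2d+1)m=n-1$ points, so some point of $S$ lies in all $2d+1$ hulls. By \cref{thm:arocha-helly} there is a point $p$ in every member of $\mathcal{H}$. Hence $p\in\langle S\setminus T\rangle_{CS}$ whenever $|T|\le m$, since removing fewer points only enlarges the hull.

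\emph{Step 2 (Carath\'eodory lemma).} We prove that if $p\in\langle A\rangle_{CS}$, then $p\in\langle B\rangle_{CS}$ for some $B\subset A$ with $|B|\le 2d+1$. This is the step I expect to be the main obstacle. The first thing I would try is to describe $\langle A\rangle_{CS}$ explicitly. The key observation is that every minimally affinely dependent subset $U\subset A$ must lie in a single piece of any convex splinter containing $A$. Therefore $\conv U\subset\langle A\rangle_{CS}$, and in fact $\langle U\rangle_{CS}=\conv U$. With this in hand, I would try to show that $\langle A\rangle_{CS}$ is obtained by iteratively adding convex hulls of Radon-type configurations. A point generated this way should need only boundedly many original points: at most one affinely dependent minimal set, which has at most $d+2\le 2d+1$ points for $d\ge1$, together with the glueing data. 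If the direct description proves messy, the fallback is a duality argument. Suppose $p\notin\langle B\rangle_{CS}$ for every small $B$, so that each such $B$ has a separating convex splinter avoiding $p$. Then use the intersection-closure of splinters together with \cref{thm:arocha-helly} to patch these into one splinter containing $A$ but not $p$.

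\emph{Step 3 (greedy partition).} Since $p\in\langle S\rangle_{CS}$, the Carath\'eodory lemma gives $A_1\subset S$ with $|A_1|\le 2d+1$ and $p\in\langle A_1\rangle_{CS}$. Inductively, suppose $A_1,\dots,A_{j-1}$ have been chosen with $j\le r-1$. They use at most $(j-1)(2d+1)\le m$ points, so by Step 1 we have $p\in\langle S\setminus(A_1\cup\dots\cup A_{j-1})\rangle_{CS}$. The lemma then gives $A_j$ of size at most $2d+1$ inside this remaining set, with $p\in\langle A_j\rangle_{CS}$. After choosing $A_1,\dots,A_{r-1}$, the leftover points form $A_r$. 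At most $m$ points have been removed, so $p\in\langle A_r\rangle_{CS}$ as well. The sets $A_1,\dots,A_r$ partition $S$, and $p$ lies in every $\langle A_j\rangle_{CS}$, which proves \cref{thm:tverberg-lp}.

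The bound $(r-1)(2d+1)^2+1$ arises exactly as the product of the Helly number (Step 1) and the Carath\'eodory number (Step 2) times $r-1$, plus one. This is why I expect the intended proof to follow this route, with Step 2 as the only genuinely splinter-specific ingredient.
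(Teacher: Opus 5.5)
Your Steps 1 and 3 are correct and match the paper's proof. Step 1 is \cref{lem:centerpoint-lp} specialized to $|S|=(r-1)(2d+1)^2+1$, and taking $A_r$ to be the leftover is a harmless variant of the paper's final redistribution. The genuine gap is Step 2, the Carath\'eodory bound $2d+1$. It is the only splinter-specific ingredient, and you do not prove it.

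Your heuristic ``at most one minimal affinely dependent set plus glueing data'' bounds nothing, because the glueing is the whole difficulty. Take the paper's tight example $S=\{0,e_1,\dots,e_d,2e_1,\dots,2e_d\}$ with $p=(e_1+\dots+e_d)/2d$. Here $p$ is reached only by chaining the $d$ minimal dependent triples $\{0,e_i,2e_i\}$ through $0$. The fallback also fails. Intersecting splinters that each contain only a small $B\subset A$ can only shrink sets, so it never produces a splinter containing all of $A$. More fundamentally, closure under intersection plus a Helly number does not yield a Carath\'eodory number. Axis-parallel boxes in $\rr^3$ have Helly number $2$, yet $0$ lies in the bounding box of $(-1,1,1),(1,-1,1),(1,1,-1)$ but not in that of any two of them.

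What is missing is an explicit description of the hull and a spanning-tree count.
\begin{itemize}
\item Let $\mathcal H(A)$ be the hypergraph on $A$ whose edges are the minimal affinely dependent subsets, and let $E_1,\dots,E_s$ be its components.
\item Affine bases chosen inside the $E_i$ have affinely independent union; otherwise a minimal dependent subset of that union would be an edge meeting two components. So $\conv E_1\cup\dots\cup\conv E_s$ is a convex splinter. It equals $\langle A\rangle_{CS}$, since every splinter containing $A$ contains each $\conv E_i$.
\item If $p\in\conv E$, ordinary Carath\'eodory gives an affinely independent $I\subset E$ with $p\in\conv I$. Extend $I$ to an affine basis $B$ of $\aff E$.
\item Join $u,v\in B$ whenever some $x\in E\setminus B$ has both $u,v$ in $\supp_B(x)$. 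This graph is connected: a splitting $B=B_1\sqcup B_2$ would split $E$ into parts lying in $\aff B_1$ and $\aff B_2$, and no minimal dependent set can cross them.
\item A spanning tree therefore gives a set $X$ of at most $|B|-1$ witnesses $x_{uv}$. Put $A'=B\cup X$. Each $\{x_{uv}\}\cup\supp_B(x_{uv})$ is a minimal dependent subset of $A'$, so $B$ lies in one component of $\mathcal H(A')$.
\item Hence $p\in\conv B\subset\langle A'\rangle_{CS}$ with $|A'|\le 2|B|-1\le 2d+1$.
\end{itemize}
Without this argument, or an equivalent one, your Step 3 has no control on $|A_j|$, and the count $(r-1)(2d+1)^2+1$ is not justified.
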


The number of points in \cref{thm:tverberg-lp} is unlikely to be optimal.  However, the dependence on $r$ is linear, which makes it strong enough to prove many of the consequences of Tverberg's theorem.  Since $\langle S \rangle_{CS} \subset \conv (S)$, \cref{thm:tverberg-lp} implies a weak version of Tverberg's theorem.

In \cref{sec:col-frac} and \cref{sec:col-only} we prove \cref{thm:colorful-fractional-cs} and \cref{thm:colorful-cs}, respectively.  We then introduce linear partitions, which allow for simpler proof of the corresponding fractional Helly and colorful Helly analogues in \cref{sec:linear-partitions}.  In \cref{sec:tverberg} we prove \cref{thm:tverberg-lp}.  Finally, we present some applications of our results in \cref{sec:applications}.

\section{Colorful and fractional convex splinters}\label{sec:col-frac}

In this section we prove \cref{thm:colorful-fractional-cs}.  We denote each family $\ff_i$ as a color class.  A set is colorful if it has at most one set from each color.

\begin{proof}[Proof of \cref{thm:colorful-fractional-cs}]
We prove a slightly more general relative statement by induction on the
dimension. Let \(K\) be an \(n\)-dimensional convex set, and let
\( t\ge n+1\).
We claim that for every \(\alpha>0\) there exists
\(\beta=\beta(\alpha,n,t)>0\)
such that the following holds. If
\( \ff_1,\ldots,\ff_t\)
are finite families of convex splinters contained in \(K\), and at least \(\alpha\prod_{i=1}^t|\ff_i|\)
of the colorful \(t\)-tuples have non-empty intersection, then some color class
contains a subfamily of relative size at least \(\beta\) with non-empty total
intersection.

The case \(n=0\) is immediate. In this case \(K\) is a point. A colorful tuple
intersects if and only if all chosen members contain this point. If a positive
fraction of colorful tuples intersects, then at least one color class contains a
positive fraction of members containing the point.

Assume now that \(n\ge 1\), and that the statement is known in dimensions
strictly smaller than \(n\). Let
\(\ff_1,\ldots,\ff_t\)
be finite families of convex splinters contained in \(K\), where \(t\ge n+1\). Let \( N_i=|\ff_i|\).
Suppose that at least \( \alpha\prod_{i=1}^t N_i\)
of the colorful \(t\)-tuples have non-empty intersection.  We split the intersecting colorful tuples into two types.

First suppose that at least \(\frac{\alpha}{2}\prod_{i=1}^t N_i\)
intersecting colorful tuples consist entirely of connected members (i.e., they are convex sets). Let
\( \mathcal C_i\subseteq\ff_i\)
be the subfamily of connected members, and let \( M_i=|\mathcal C_i|\).

We have \( \prod_{i=1}^t M_i\ge \frac{\alpha}{2}\prod_{i=1}^t N_i\).
Since each \(M_i/N_i\le 1\), this implies
\( M_i\ge \frac{\alpha}{2}N_i\) for every \(i\).

The members of the families \(\mathcal C_i\) are ordinary convex sets in the
\(n\)-dimensional affine space \(\operatorname{aff}K\). Moreover, at least an
\(\alpha/2\)-fraction of the colorful choices from
\(\mathcal C_1,\ldots,\mathcal C_t\) have non-empty intersection. By the colorful fractional Helly theorem for
ordinary convex sets, applied with \(t\ge n+1\) colors, there is an index \(i\)
and a subfamily
\( \mathcal G_i\subseteq\mathcal C_i\)
such that
\(|\mathcal G_i|\ge \gamma M_i \) and \( \bigcap_{C\in\mathcal G_i}C\neq\emptyset\),
where \(\gamma=\gamma(\alpha,n,t)>0\). Using the fact that
\( M_i\ge \frac{\alpha}{2}N_i\),
 we obtain
\( |\mathcal G_i|\ge \gamma\frac{\alpha}{2}N_i\).
The desired conclusion holds in this case.

We now assume that at least \( \frac{\alpha}{2}\prod_{i=1}^t N_i\)
intersecting colorful tuples contain a disconnected member. Assign each such
tuple to one disconnected member appearing in it. There are at most \(t\) choices
for the color of the assigned member. For some color \(i_{0}\), there are at least \(
        \frac{\alpha}{2t}\prod_{k=1}^t N_k
\)  tuples
assigned to color \(i_{0}\).

Averaging over the \(N_{i_{0}}\) or fewer members of \(\ff_{i_{0}}\), there exists a
disconnected convex splinter \(C\in\ff_{i_0}\)
such that at least \(\frac{\alpha}{2t}\prod_{k\ne {i_0}} N_k\)
colorful choices from the remaining color classes satisfy
\( \displaystyle C\cap \bigcap_{k\ne {i_0}}C_k\neq\emptyset\), where $C_k \in \ff_k$ for all $k \neq i_0$.

We denote the connected components of \(C\) as
\(C=K_1\cup\cdots\cup K_s\), for \(s\ge 2\).
Let \(n_j=\dim K_j\). Since the components of \(C\) are in general position inside the
\(n\)-dimensional affine space \(\operatorname{aff}K\), we have \( \sum_{j=1}^s(n_j+1)\le n+1\).
Because \(s\ge 2\), this implies \(n_j+1\le n\)
for every \(j\).

Now we restrict all remaining color classes to \(C\):
\( \mathcal{H}_k=\{C\cap D:D\in\ff_k\}\), for \(k\ne i_0\). Previously we showed \( \displaystyle C\cap \bigcap_{k\ne {i_0}}C_k\neq\emptyset\), which is equivalent to \( \displaystyle \bigcap_{k\ne {i_0}}(C \cap C_k)\neq\emptyset\), so among the colorful choices from these \(t-1\) restricted color classes, at least
an
\( \frac{\alpha}{2t}\)
fraction have non-empty intersection.

Every such intersection lies in at least one component \(K_j\). Assign each
intersecting colorful tuple to one component in which it intersects. For some
component \(K_j\), at least an \( \frac{\alpha}{2ts}\)
fraction of all colorful choices from the remaining color classes have non-empty intersection inside \(K_j\). {Since $\sum_{j=1}^s (n_j+1) \le n+1$, and $n_j + 1 \ge 1$, we have $s \le n+1$.  This implies $\frac{\alpha}{2ts} \ge \frac{\alpha}{2t(n+1)}$.}

Restricting further to \(K_j\), we obtain \(t-1\) color classes of convex
splinters contained in \(K_j\). Since \(\dim K_j=n_j\le n-1\)
and \(t-1\ge n\ge n_j+1\),
the induction hypothesis applies in \(K_j\) to these \(t-1\) color classes.
Therefore, for some \(k\ne i_0\), there is a subfamily
\(\mathcal G_k\subseteq\ff_k\)
with \(|\mathcal G_k|\ge \beta'|\ff_k|\)
such that \(\bigcap_{D\in\mathcal G_k}(D\cap C\cap K_j)\neq\emptyset\).
In particular,
\(\bigcap_{D\in\mathcal G_k}D\neq\emptyset\).
This proves the desired conclusion in the second case as well.

Finally, taking \(K=\rr^d\) and \(t=d+1\) proves the
theorem.
\end{proof}

\section{Colorful Helly for convex splinters}\label{sec:col-only}

Before proving \cref{thm:colorful-cs}, we introduce a parameter for convex splinters.

\begin{definition}
    Let \(A =K_1\cup\cdots\cup K_s\) be a convex splinter decomposed into its connected components.  Then we define
    \[
    \rho(A) = \sum_{j=1}^s (2\dim(K_j)+1). 
    \]
\end{definition}

\begin{proof}[Proof of \cref{thm:colorful-cs}]
We prove a slightly more general statement. Let \(A\) be a convex splinter,
and let \(\ff_1,\ldots,\ff_m\) be finite families of convex
splinters contained in \(A\). We claim that if \( m\ge \rho(A)\)
and every colorful choice from \(\ff_1,\ldots,\ff_m\) has
non-empty intersection, then some color class \(\ff_i\) has non-empty
total intersection.

We prove this by induction on \(\rho(A)\).  If \(\rho(A)=1\), then \(A\) is a point and the conclusion follows immediately.  Now suppose that
\( A=K_1\cup\cdots\cup K_s\)
is disconnected.  Let $A=K_1\cup \dots \cup K_s$ with $s \ge 2$, and let $\rho_j = \rho (K_j)$.

Assume, toward a contradiction, that every color class has empty intersection in \(A\). Then, for every \(i\in[m]\) and every component \(K_j\), we also have
\(\bigcap_{C\in\ff_i}(C\cap K_j)=\emptyset\).

Since \(m\ge \rho(A) = \rho_1+ \dots + \rho_s\), we may choose pairwise disjoint sets of color indices
\( I_1,\ldots,I_s\subset[m]\)
such that \(|I_j|=\rho_j\)
for each \(j\).

Fix \(j\). Consider the restricted color classes
\[
        \ff_i|_{K_j}=\{C\cap K_j:C\in\ff_i\},
        \qquad i\in I_j.
\]
These are finite families of convex splinters contained in \(K_j\). Since
\(\rho(K_j)=\rho_j\), the induction hypothesis applied inside \(K_j\) implies
that the colorful Helly conclusion holds for these \(\rho_j\) color classes.
But every color class has empty intersection inside \(K_j\). Therefore, there is a colorful choice with empty intersection.  In other words, we may choose
\( C_i\in\ff_i\), for \(i\in I_j\) such that
\(K_j\cap \bigcap_{i\in I_j}C_i=\emptyset\).

We do this independently for every component \(K_j\). For the remaining colors, choose arbitrary members. The resulting full colorful choice has empty
intersection inside every component \(K_j\), and therefore has empty
intersection in \(A\). This contradicts the hypothesis, which is the contradiction we wanted.

Now consider the case where \(A\) is connected. Then \(A\) is a convex
set of some dimension \(n\), and
\(\rho(A)=2n+1\).  Let \(m\ge 2n+1\), and suppose that every colorful choice from
\(\ff_1,\ldots,\ff_m\) has non-empty intersection.

If every member of every color class is connected, then every member is an
ordinary convex set in the \(n\)-dimensional affine space \(\operatorname{aff}A\).  By the classical
colorful Helly theorem for convex sets, one of these \(m \ge 2n+1 \ge n+1\) color classes has
non-empty intersection.

We may therefore assume that some color class contains a disconnected convex
splinter. Assume without loss of generality that
\( C\in\ff_m\) is disconnected. Write its connected components as
\( C=K_1\cup\cdots\cup K_s\), with \( s\ge 2
\).  Let \( n_j=\dim K_j\).

Since the components of \(C\) are in general position inside the \(n\)-dimensional
affine space \(\operatorname{aff}A\), we have \(\sum_{j=1}^s(n_j+1)\le n+1\).
Therefore
\[
        \rho(C)
        =
        \sum_{j=1}^s(2n_j+1)
        =
        2\sum_{j=1}^s(n_j+1)-s
        \le
        2(n+1)-2
        =
        2n.
\]
Since \(m\ge 2n+1\), we have \( m-1\ge \rho(C)\).

Now restrict the first \(m-1\) color classes to \(C\):
\[
        \mathcal G_i=\{D\cap C:D\in\ff_i\},
        \qquad i=1,\ldots,m-1.
\]
Every colorful choice from \(\mathcal G_1,\ldots,\mathcal G_{m-1}\) has
non-empty intersection.

By the induction hypothesis applied to \(C\), some restricted color class has non-empty intersection, as we wanted to show.

Taking \(A=\mathbb R^d\), for which \(\rho(A)=2d+1\), finishes the proof.

The optimality follows from \cref{thm:arocha-helly}. If a colorful Helly theorem held with fewer than
\(2d+1\) color classes, then by taking all color classes to be equal to some family $\ff$, we  would obtain an ordinary Helly theorem for convex splinters with fewer than \(2d+1\) sets, contrary
to the optimality \cref{thm:arocha-helly}.
\end{proof}

\section{Colorful Helly for linear partitions}\label{sec:linear-partitions}

A family of sets related to convex splinters that is easier to describe is linear partitions, also introduced by Arocha et al. \cites{Arocha2007, Arocha2011}.

\begin{definition}
    A linear partition of $\rr^d$ is the union of a finite family $A_1,\dots, A_s$ of affine spaces of $\rr^d$ so that for any choice $U_1 \subset A_1, \dots, U_s \subset A_s$ of sets of points which are affinely independent, their union $\bigcup_{j=1}^s U_j$ is also affinely independent.
\end{definition}

The Helly number for the family of linear partitions of $\rr^d$ is $\left\lfloor 3(d+1)/2 \right\rfloor$.  This is still greater than $d+1$, but it is smaller than the Helly number for convex splinters.

Some of our results for convex splinters have a much easier proof for linear partitions.  For example, to prove \cref{coro:frac-splinters} for linear partitions it suffices to notice that linear partitions are semialgebraic sets of bounded complexity, and therefore Matou\v{s}ek's fractional Helly for families with bounded dual shatter function \cite{Matousek2004} implies that their fractional Helly number is at most $d+1$.  This gives us a second example of a family with a fractional Helly number smaller than its Helly number in which the proof does not rely on the B\'ar\'any--Matou\v{s}ek hypergraph saturation technique.

Computing the dual shatter dimension for linear partitions is an interesting problem.  Before doing this, consider the following parameter.

\begin{definition}\label{def:pohoata}
    Let $X_d$ be the family of linear partitions in $\rr^d$.  We denote by $\tau'(X_d)$ the largest number $m$ for which there exist $m$ intersecting linear partitions $C_1,\dots, C_m$ such that for each $i \in [m]$ there exists a point $x_i$ in all $C_j$ for $j \neq i$ which additionally satisfies $x_i \not\in C_i$.
\end{definition}

\begin{claim}
    For each dimension $d\ge 0$ we have $\tau'(X_d) \le 2d$.
\end{claim}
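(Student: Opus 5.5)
We would prove the claim by induction, in the same spirit as the proof of \cref{thm:colorful-cs}, using a relative version of the statement. For a linear partition $A$, let $f(A)$ denote the largest $m$ for which there are linear partitions $C_1,\dots,C_m\subset A$ with a common point $p$ and witnesses $x_i\in\bigcap_{j\neq i}C_j\setminus C_i$. We will show that $f(L)\le 2n$ when $A=L$ is a single affine space of dimension $n$. We will also show that, if $A=K_1\cup\dots\cup K_s$ with $s\ge 2$ and $\dim K_j=n_j$, then
\[
f(A)\le \sum_{j=1}^s 2n_j+(s-1).
\]
Taking $A=\rr^d$ then gives $\tau'(X_d)\le 2d$. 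Throughout we use that intersections of linear partitions are linear partitions, so all restrictions below stay in the class. The base case is $n=0$, where $A$ is a point. Every $C_i$ contains $p$, which is the whole of $A$, so no witness $x_i\notin C_i$ can exist, and $f=0$.

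\emph{Disconnected case.} Let $A=K_1\cup\dots\cup K_s$ with $p\in K_1$, and let $I_j$ be the set of indices $i$ with $x_i\in K_j$.

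For $j=1$, the restricted family $\{C_l\cap K_1\}_{l\in I_1}$ has the common point $p$ and the witnesses $x_i$ for $i\in I_1$. Hence $|I_1|\le f(K_1)$.

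For $j\ge 2$, the point $p$ may not be in $K_j$. In that case we fix some $i_0\in I_j$ and use $x_{i_0}$ as the common point for the family $\{C_l\cap K_j\}_{l\in I_j\setminus\{i_0\}}$. The remaining $x_i$ with $i\in I_j\setminus\{i_0\}$ are still witnesses, so $|I_j|\le f(K_j)+1$.

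By induction on the dimension each $K_j$ satisfies $f(K_j)\le 2n_j$, and summing over $j$ gives the displayed bound. Since the components are in general position inside an $n$-dimensional space, $\sum_j(n_j+1)\le n+1$. Therefore
\[
\sum_j 2n_j+(s-1)=2\sum_j(n_j+1)-s-1\le 2n-1 \qquad\text{when } s\ge 2.
\]

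\emph{Connected case.} Let $A=L$ be an affine space of dimension $n$. If every $C_i$ equals $L$, then no witness can exist, so $m=0$. Otherwise some $C_m\subsetneq L$. This $C_m$ is either a proper affine subspace, for which induction gives $f(C_m)\le 2(n-1)$, or it is disconnected, for which the computation above gives $f(C_m)\le 2n-1$. In either case $f(C_m)\le 2n-1$.

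We then restrict $C_1,\dots,C_{m-1}$ to $C_m$. The point $p$ is common to all of them, and for $i<m$ each witness $x_i$ lies in $C_m$ and outside $C_i$. Hence $m-1\le f(C_m)\le 2n-1$, so $m\le 2n$.

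The step needing the most care is the disconnected case, because the common point $p$ lies in only one component while the witnesses may be spread over several components. The device of sacrificing one index per extra component, using $x_{i_0}$ as a substitute common point, is what costs the $+(s-1)$. The general position inequality $\sum_j(n_j+1)\le n+1$ is exactly what absorbs this cost. To make the induction airtight, it should be run on the parameter $\sum_j(n_j+1)$ of the ambient partition, which strictly decreases at each restriction.
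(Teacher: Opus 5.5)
Your proof is correct. Its skeleton is the paper's: restrict $C_1,\dots,C_{m-1}$ to $C_m$, split $C_m$ into its affine components, bound the component containing $p$ by $2t$ via induction, bound every other component by $2t_j+1$, and add one for $x_m$. The final count is identical.

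The difference is how you get the $2t_j+1$ bound for the components that miss $p$.

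\emph{The paper's route.} It first proves a separate lemma, with its own induction: a \emph{star} in $\rr^d$ has at most $2d+1$ points. A star is a set $B$ with $b\notin\langle B\setminus\{b\}\rangle_{LP}$ for every $b\in B$. The paper then uses that the witnesses form a star and that stars restrict to stars on affine subspaces.

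\emph{Your route.} You let one witness $x_{i_0}$ serve as the common point, which gives $|I_j|\le f(K_j)+1$ directly. In effect you observe that a star of size $\ell$ is a $\tau'$-configuration of size $\ell-1$. So the star bound becomes a consequence of the $\tau'$ bound instead of an input to it. This merges the paper's two inductions into one. The paper's route has the merit of stating the star bound for $\langle\cdot\rangle_{LP}$ as a standalone result, but your argument recovers it as a corollary.

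One small correction to your closing remark. The parameter $\sum_j(n_j+1)$ does not strictly decrease when you pass from $L$ to a disconnected $C_m$. For example, if $C_m$ consists of $n+1$ affinely independent points, the parameter stays equal to $n+1$. This does no harm, because your argument is really a strong induction on $n$ for the connected statement, with the disconnected bound derived from lower dimensions. Alternatively, your two bounds combine into the single statement $f(A)\le\rho(A)-1$, with $\rho$ as in \cref{sec:col-only}, and $\rho$ does strictly decrease at every restriction you make.
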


For $S \subset \rr^d$, let $\langle S\rangle_{LP}$ be the intersection of all linear partitions that contain $S$.

\begin{proof}
    We first bound the size of a certain configuration of points in $\rr^d$.  We say that $B=\{b_1,\dots,b_{\ell}\}\subset \rr^d$ is a \textit{star} if for each $i \in [\ell]$ we have $b_i \not\in \langle B\setminus \{b_i\}\rangle_{LP}$.  Equivalently, there exists a linear partition $D_i$ that contains $B\setminus\{b_i\}$ but not $b_i$.  Let us show by induction on $d$ that the maximum cardinality of a star in $\rr^d$ is at most $2d+1$.  For $d=0$ this is clear.

    Since $D_m=\langle B\setminus \{b_m\}\rangle_{LP}$ is neither empty nor equal to $\rr^d$, it is a union of affine spaces $A_1,\dots, A_s$, each of dimension at most $d-1$.  If $t_j = \dim A_j$ for all $j$, we have $\sum_j (t_j + 1) \le d+1$.

    A key property about stars is that the intersection of a star with an affine space $A$ is either empty or a star in \(A\).

    Now consider the sets $A_1 \cap B, \dots, A_s \cap B$.  By induction, we have $|A_j \cap B| \le 2t_j + 1$.  If $s = 1$, then there are at most $2t_1+1 \le 2(d-1)+1 = 2d-1$ points of $B$ in $D_m$ and one point outside of it, which gives us a bound of $2d$ points for $|B|$.

    Now assume $d \ge 2$.  Since there is at most one point missing from $D_m$, we have
    \[
    |B|-1 \le \sum_{j=1}^s (2t_j+1) = \left(2\sum_{j=1}^s (t_j+1)\right) - s \le 2d+2-s \le 2d.
    \]
    The bound above implies $|B| \le 2d+1$.

    Now we prove the bound on $\tau'(X_d)$.  Let $x_1,\dots, x_m$ and $C_1,\dots,C_m$ be a family of points and linear partitions that realize $\tau'(X_d)$, and let $p$ be a point in the intersection of $C_1,\dots,C_m$.  Let $B = \{x_1,\dots,x_m\}$.

    The linear partition $C_m$ is neither empty nor $\rr^d$, so it is a union of affine spaces $A_1,\dots, A_s$, each of dimension at most $d-1$.

    Assume without loss of generality that $p \in A_s$.  Let $t_j = \dim A_j$ for $j \in [s]$.

    The set $A_s \cap B$ together with the family $\{C_i \cap A_s: x_i \in A_s\}$ is a configuration of points and linear partitions in dimension $t_s$ as in  \cref{def:pohoata}.  Therefore, by induction we have $|B \cap A_s| \le 2t_s$.  For $j \neq s$, the configuration $B \cap A_j$ is a star, which implies $|B \cap A_j| \le 2t_j + 1$.  The set $B \setminus C_m$ has at most one element.

    If $s = 1$, then $|B| -1 \le 2t_1 \le 2(d-1) \le 2d-1 $.  This implies $|B| \le 2d$.

    If $s \ge 2$, then
    \begin{align*}
    |B|-1 \le \left(\sum_{j=1}^{s-1} (2t_j + 1)\right) + (2t_s) = \left(\sum_{j=1}^{s-1} (2t_j + 2)\right)-(s-1) + 2(t_s+1)-2 \\
    \left(\sum_{j=1}^s 2(t_j+1)\right)-(s-1)-2\le 2(d+1)-s-1 \le 2d-1.
    \end{align*}
    Therefore, $|B| \le 2d$, as we wanted to show.
    
\end{proof}

With this result, we can prove \cref{thm:colorful-cs} for linear partitions relatively quickly.  The parameter $\tau'$ was introduced by Pohoata, Yang, and Zhang to bound the colorful Helly number for families of sets.

\begin{proof}[Proof of \cref{thm:colorful-cs} for linear partitions]
    By \cite{Pohoata2025}*{Thm. 1.3}, the number of color classes needed for \cref{thm:colorful-cs} is bounded above by \(\tau'(X_d)+1
    \), which is at most $2d+1$.
\end{proof}

The dual VC dimension of a family of sets is an important parameter.  Let us define it properly.  Let $X$ be a family of subsets of some set $Y$.  Given $m$ sets $C_1,\dots, C_m$ in $X$, two elements $x,y \in Y$ are going to be equivalent if they belong to exactly the same sets among the $C_i$.  Formally,
\[
\{i \in [m]: x \in C_i\}=\{i \in [m]: y \in C_i\}.
\]

The dual VC dimension of $X$, denoted $VC^*(X)$, is the largest value of $m$ for which we can find sets $C_1,\dots, C_m \in X$ that generate the full $2^m$ possible equivalence classes.  In other words, for every subset $A \subset [m]$ there exists a point $y \in Y$ that is exactly in all $C_i$ for $i \in A$ and in none of the $C_i$ for $i \not\in A$.

\begin{theorem}
    Let $d$ be a positive integer and let $X_d$ be the family of linear partitions in \(\rr^d\).  Then,
    \[
    d+1 \le VC^*(X_d) \le 2d.
    \]
\end{theorem}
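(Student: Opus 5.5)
The statement has two bounds, and I would prove them separately. The upper bound follows from the Claim on $\tau'(X_d)$. The lower bound needs an explicit configuration of $d+1$ linear partitions that is dually shattered.

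\emph{Upper bound.} Let $C_1,\dots,C_m\in X_d$ realize all $2^m$ equivalence classes. Taking $A=[m]$ gives a point lying in every $C_i$, so the family is intersecting. For each $i$, taking $A=[m]\setminus\{i\}$ gives a point $x_i$ that lies in every $C_j$ with $j\neq i$ and satisfies $x_i\notin C_i$. This is exactly the configuration in \cref{def:pohoata}, so $m\le \tau'(X_d)$. The Claim then gives $m\le 2d$, hence $VC^*(X_d)\le 2d$.

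\emph{Lower bound.} Choose hyperplanes $H_1,\dots,H_{d+1}$ in general position, so that any $k\le d$ of them meet in a flat of dimension $d-k$ and no such flat lies inside another $H_\ell$. In particular, any $d$ of them meet in a single point outside the remaining hyperplane. Choose a point $p$ that lies on none of the $H_i$ and differs from each of these $d+1$ vertices. Set $C_i=H_i\cup\{p\}$.

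First, each $C_i$ is a linear partition. Take affinely independent sets $U\subset H_i$ and $U'\subset\{p\}$. Since $p\notin H_i$, the point $p$ is not in $\aff U$, so $U\cup U'$ is still affinely independent. This is consistent with the count $(d-1+1)+(0+1)=d+1$.

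Next I check the equivalence classes. For $A=[d+1]$, the point $p$ lies in all the $C_i$. Now let $A\subsetneq[d+1]$, so $|A|\le d$. The flat $F=\bigcap_{i\in A}H_i$ has dimension $d-|A|\ge 0$; when $A=\emptyset$ we take $F=\rr^d$. I need a point of $F$ avoiding $p$ and every $H_\ell$ with $\ell\notin A$.
\begin{itemize}
\item If $\dim F\ge 1$, then each $F\cap H_\ell$ with $\ell\notin A$ is a proper subflat by general position. A flat of positive dimension is not covered by finitely many proper subflats together with one point, so such a point exists.
\item If $\dim F=0$, then $|A|=d$ and $F$ is one vertex. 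That vertex is not on the remaining hyperplane by general position, and it is not $p$ by the choice of $p$.
\end{itemize}
Hence all $2^{d+1}$ classes occur and $VC^*(X_d)\ge d+1$.

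The only step that needs care is the genericity bookkeeping in the lower bound: the general-position conditions on the $H_i$ and the choice of $p$ must be fixed in advance so that every one of the $2^{d+1}$ patterns, including the $d+1$ single-vertex cases, is realized simultaneously. The upper bound is immediate from the Claim.
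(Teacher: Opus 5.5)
Your proof is correct and follows the paper's approach. The upper bound is $VC^*(X_d)\le\tau'(X_d)\le 2d$, where you spell out the reduction from dual shattering to the $\tau'$ configuration that the paper leaves implicit; your lower-bound family $C_i=H_i\cup\{p\}$ is the paper's construction, with generic hyperplanes and a generic point in place of the facet hyperplanes of a simplex and its barycenter, and a generic point of $\bigcap_{i\in A}H_i$ in place of the barycenter of a face to realize each pattern (your requirement that $p$ avoid the vertices is automatic once $p$ lies on no $H_i$).
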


\begin{proof}
 The upper bound is immediate, $VC^*(X_d) \le \tau'(X_d) \le 2d$.

 For the lower bound, we give a simple construction.  Let $\Delta^d$ be a $d$-dimensional simplex in $\rr^d$, with facets $F_1,\dots,F_{d+1}$, and let $p$ be the barycenter of $\Delta^d$.  Consider the linear partitions $C_i = \operatorname{aff}(F_i) \cup \{p\}$ for $i \in [d+1]$.  The barycenter of every non-empty face of $\Delta^d$ is in the intersection of a different subset of the linear partitions $C_i$.  If we pick any point that is not contained in any $C_i$ we can also realize the empty intersection.
\end{proof}

We conjecture that $VC^*(X_d) = d+1$.

\section{Tverberg-type results}\label{sec:tverberg}

To prove \cref{thm:tverberg-lp}, we first need to prove the convex splinters version of a couple of other results in combinatorial geometry.  The first is a version of Rado's centerpoint theorem \cite{Rado1946}.

\begin{lemma}\label{lem:centerpoint-lp}
Let $S$ be a finite set of points in $\rr^d$.  There exists a point $p$ such that every subset $S' \subset S$ with $|S'|>\left(1-\frac{1}{2d+1}\right)|S|$ satisfies $p \in \langle S'\rangle_{CS}$.    
\end{lemma}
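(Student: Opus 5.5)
The plan is to mimic the standard derivation of Rado's centerpoint theorem from Helly's theorem, now using \cref{thm:arocha-helly} with Helly number $2d+1$. Let $n=|S|$. Consider the family
\[
\mathcal{F}=\left\{\langle S'\rangle_{CS} : S'\subset S,\ |S'|>\left(1-\tfrac{1}{2d+1}\right)n\right\}.
\]
Each member is a convex splinter, since intersections of convex splinters are convex splinters. The family is finite because $S$ is finite. A point $p$ lies in every member of $\mathcal F$ if and only if it satisfies the conclusion of the lemma. So it suffices to show $\bigcap\mathcal F\neq\emptyset$. By \cref{thm:arocha-helly}, it is enough to check that any $2d+1$ or fewer members intersect.

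Take $S'_1,\dots,S'_k$ with $k\le 2d+1$, each of size more than $\left(1-\frac1{2d+1}\right)n$. Each complement $S\setminus S'_i$ has fewer than $\frac{n}{2d+1}$ points. Their union therefore has fewer than $k\cdot\frac{n}{2d+1}\le n$ points, so some point $s\in S$ lies in every $S'_i$. Since $s\in S'_i\subset\langle S'_i\rangle_{CS}$, the point $s$ is common to all $k$ hulls. Helly's condition holds, and \cref{thm:arocha-helly} gives a point $p$ in every member of $\mathcal F$.

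There are two small points to check. First, the strict inequality in the hypothesis $|S'|>(1-\frac1{2d+1})|S|$ is exactly what makes the union of complements strictly smaller than $n$. Second, we need $\langle S'\rangle_{CS}$ itself to be a convex splinter, meaning a finite union of convex sets with the independence property. The paper asserts that intersections of convex splinters are convex splinters. If the intersection defining the hull is infinite, one may want to note that the relevant structure stabilizes, or work only with $S'\subset S$, where the hull could be replaced by the intersection of finitely many convex splinters containing $S'$. Beyond this, I expect no real obstacle: the Helly step is the whole content, and everything else is counting.
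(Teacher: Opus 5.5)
Your proposal is correct and is essentially the paper's own proof. Like the paper, you apply \cref{thm:arocha-helly} to the family of hulls $\langle S'\rangle_{CS}$ of large subsets, and you verify the $(2d+1)$-wise condition by noting that the complements $S\setminus S'_i$ have total size less than $|S|$, so some point of $S$ lies in every $S'_i$. Your side worry about whether $\langle S'\rangle_{CS}$ is a convex splinter is settled within the paper: the proof of \cref{lem:caratheodory-lp} shows that $\langle S'\rangle_{CS}$ is the finite union $\conv(E_1)\cup\cdots\cup\conv(E_s)$ of the convex hulls of the connected components of the minimal-dependence hypergraph.
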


\begin{proof}
    The family 
    
    \[\ff= \left\{\langle S' \rangle_{CS}: S'\subset S, |S'|>\left(1-\frac{1}{2d+1}\right)|S| \right\}\]
    satisfies the intersection property of \cref{thm:arocha-helly}.  Take arbitrary \(C_i=\langle S_i\rangle_{CS}\in\mathcal F\), \(i=1,\ldots,2d+1\). Then $|S\setminus S_i|<\frac{|S|}{2d+1}$, so $\left|\bigcup_{i=1}^{2d+1}(S\setminus S_i)\right|<|S|$. This means that there exists some $p \in S \setminus \bigcup_{i=1}^{2d+1} (S \setminus S_i) = \bigcap_{i=1}^{2d+1} S_i$, so $p \in \bigcap_{i=1}^{2d+1}\langle S_i\rangle_{CS} =\bigcap_{i=1}^{2d+1}C_i.$   This implies that $\ff$ is intersecting.
\end{proof}

The second is a version of Carath\'eodory's theorem for convex splinters.

\begin{lemma}\label{lem:caratheodory-lp}
    If $p$ is a point in $\rr^d$ and $S \subset \rr^d$ is a finite set such that $p \in \langle S \rangle_{CS}$, there exists a set $S' \subset S$ with $|S'| \le 2d+1$ such that $p \in \langle S' \rangle_{CS}$.
\end{lemma}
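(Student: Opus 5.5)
We derive the statement from the Helly theorem of Arocha, Bracho and Montejano (\cref{thm:arocha-helly}) by the standard duality between Helly and Carath\'eodory. Suppose, for contradiction, that $p \notin \langle S' \rangle_{CS}$ for every $S' \subset S$ with $|S'| \le 2d+1$. For each such $S'$ there is then a convex splinter $C_{S'}$ with $S' \subset C_{S'}$ and $p \notin C_{S'}$; concretely, we take $C_{S'} = \langle S' \rangle_{CS}$, which is itself a convex splinter. For every point $s \in S$ we then form
\[
D_s = \bigcap \{ C_{S'} : S' \subset S,\ |S'| \le 2d+1,\ s \in S' \}
\]
and look for a finite family of convex splinters whose pattern of intersections forces a contradiction. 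Since there are only finitely many $S'$, the family $\{C_{S'}\}$ is finite.

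The cleaner route works with the family $\mathcal{F} = \{ C_{S'} : S' \subset S, |S'| \le 2d+1\}$ together with the following additional sets. Note that $\langle S \rangle_{CS} \subset C$ holds for any convex splinter $C \supset S$, and $p \in \langle S\rangle_{CS}$. We argue via complements instead: define, for each $s \in S$, the convex splinter $E_s$ given by the intersection of all $C_{S'}$ with $s\in S'$. I do not think this closes the argument directly, so the real plan is the dual formulation. Consider the family $\mathcal{G} = \{ C_{S'} \}$ and add the single ``splinter'' $\langle S\rangle_{CS}$. The point is to find a convex splinter containing $S$ that avoids $p$. By \cref{thm:arocha-helly}, it suffices to show that every $2d+1$ members of a suitable family intersect.

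Concretely, I would index by points of $S$: for each $s\in S$, let $\mathcal{K}_s$ be the set of convex splinters containing $s$ but not $p$. The key observation should be the following. If $p \notin \langle S\rangle_{CS}$ fails to be witnessed by $2d+1$ points, then the hull operator $\langle\cdot\rangle_{CS}$ is a convexity space whose Helly number is $2d+1$ by \cref{thm:arocha-helly}, and by Levi's theorem its Radon number is finite. For Carath\'eodory, however, I would instead use the explicit structure. Each convex splinter hull $\langle T\rangle_{CS}$ is a finite union of convex sets $K_1\cup\dots\cup K_s$ with $\sum (\dim K_j+1)\le d+1$, and each $K_j = \conv(T\cap K_j)$. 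So $p \in \langle S\rangle_{CS}$ lies in some component $K_j=\conv(S\cap K_j)$ of dimension $n_j$.

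The argument then runs by induction on $\rho$, as in the proof of \cref{thm:colorful-cs}. Suppose first that $\langle S\rangle_{CS}$ is disconnected, with $p$ in the component $K_j$. Apply induction inside the smaller splinter; since $\rho(K_j)\le 2d$, we get at most $2\dim K_j+1$ points. If $\langle S\rangle_{CS}=\conv S$ is connected and $d$-dimensional, the ordinary Carath\'eodory theorem gives $d+1$ points $T$ with $p\in\conv T$. We then must enlarge $T$ so that $\langle T'\rangle_{CS}$ is connected around $p$. To do this, add up to $d$ more points of $S$ so that no splinter decomposition of $T'$ separates the simplex containing $p$. This is where the budget $2d+1$ should come from: $d+1$ points for Carath\'eodory plus $d$ points to ``glue'' the lower-dimensional pieces, matching $\rho=2d+1$.

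\textbf{Main obstacle.} The hardest step is proving that the gluing points suffice, namely that $p\in\langle T'\rangle_{CS}$. My first attempt is to choose $T'$ as a subset of $S$ with $\langle T'\rangle_{CS}$ connected of full dimension, built greedily and inductively on the components of splinter hulls of subsets.
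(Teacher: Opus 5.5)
Your proposal has a genuine gap. The step you flag as the main obstacle is the whole content of the lemma, and it is left unproved.

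The reduction to a single component is fine in outline: $p$ lies in a component $K_j=\conv E$ of $\langle S\rangle_{CS}$, where $E=S\cap K_j$, and $\langle E\rangle_{CS}=K_j$. This needs a line of justification: if $\langle E\rangle_{CS}$ were disconnected, replacing $K_j$ by it would give a strictly smaller convex splinter containing $S$. But your ``induction on $\rho$'' only reduces everything to this connected case, and there you give no mechanism. ``Choose $T'$ greedily'' explains neither why $d$ extra points suffice nor why $p\in\langle T'\rangle_{CS}$ afterwards. The opening Helly-duality paragraph does not help either; also, Levi's theorem bounds the Helly number by the Radon number, not the other way around.

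The missing idea is a spanning-tree argument on an affine basis of the component.
\begin{itemize}
\item A minimal affinely dependent set $A$ contained in a convex splinter must lie in a single component. Otherwise its pieces in different components are affinely independent, and the splinter condition would make $A$ independent.
\item Extend an affinely independent Carath\'eodory set $I\subset E$ with $p\in\conv I$ to an affine basis $B\subset E$ of $\aff E$, with $|B|=r+1$. Each $x\in E\setminus B$ gives the minimal dependent set $\{x\}\cup\supp_B(x)$.
\item Join $u,v\in B$ when they lie in a common $\supp_B(x)$. This graph is connected. Otherwise $B=B_1\cup B_2$ with every $x$ supported on one side, so $E\subset\aff B_1\cup\aff B_2$. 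Then $\conv(E\cap\aff B_1)\cup\conv(E\cap\aff B_2)$ is a disconnected convex splinter containing $E$, contradicting that $\langle E\rangle_{CS}$ is connected.
\item A spanning tree has $r$ edges. Picking one witness $x$ per edge gives $S'=B\cup X$ with $|S'|\le 2r+1\le 2d+1$.
\item By the first observation, every convex splinter containing $S'$ has all of $B$ in one convex component, hence contains $\conv B\ni p$.
\end{itemize}
Note that the count works because you first enlarge the Carath\'eodory set to a full affine basis $B$ of the component and then glue with $|B|-1$ witnesses. It is not ``an arbitrary Carath\'eodory set plus $d$ points.''
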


\begin{proof}
Given a set $S$ of points in $\rr^d$, let us first describe the set $\langle S \rangle_{CS}$.  We are going to make a hypergraph $\mathcal{H}(S)$ with vertex sets $S$.  A subset $A \subset S$ is going to be an edge of the hypergraph if $A$ is affine dependent, but every proper subset of $A$ is affine independent.

Notice that if $C$ is a convex splinter such that $S \subset C$, then $A$ must be in a single connected component of $C$.  Therefore, if $E_1,\dots,E_s$ are the connected components of $\mathcal{H}(S)$, we have $\conv E_i \subset C$. For each \(i\), choose an affine basis $B_i\subset E_i$ of
$\aff(E_i)$. We claim that $B_1\cup\cdots\cup B_s$ is affinely independent. Otherwise, there exists a minimal affinely dependent
subset $A \in B_1\cup\cdots\cup B_s$. Since $A\subset S$, $A$ is an edge of $\mathcal H(S)$. Moreover, $A$ must meet at least two distinct $B_i$'s, because each $B_i$ is affinely independent, contradicting the fact that $E_i$'s are connected components.

This shows the affine spaces $\operatorname{aff}(E_1), \dots, \operatorname{aff}(E_s)$ form a linear partition.  Therefore
\[
\langle S \rangle_{CS} = \conv (E_1) \cup \dots \cup \conv (E_s).
\]

Now suppose $p \in \langle S \rangle_{CS}$.  By the description above, there is a connected component $E$ of $\mathcal{H}(S)$ such that $p \in \conv E$.  By Carath\'eodory's theorem we have a set $I \subset E$ such that $|I| \le d+1$ and $p \in \conv I$.  We may assume that $I$ is affine independent, after removing points if necessary.

Let $r = \dim \operatorname{aff}(E)$.  Since $I \subset E$ and it is affine independent, there must exist an affine independent set $B$ such that $I \subset B \subset E$ such that $|B| = r+1$. 

For each point $x \in E \setminus B$, the set $B \cup \{x\}$ is affine dependent, and there is a unique  affine dependent subset of $B \cup \{x\}$.  We denote it by
\[
C_x = \{x\} \cup \supp_B(x).
\]
In other words, $\supp_B(x)$ is the set of vertices of $B$ that have non-zero coefficients in an affine combination of $B$ that gives $x$.

Now we define a graph $G$ on the vertex set $B$.  Two points $u,v \in B$ will form an edge if there exists some $x_{uv} \in E \setminus B$ such that
\[
\{u,v\}\subset \supp_B(x_{uv}).
\]

Let us show that $B$ is connected.  If $G$ was disconnected, then there is a nontrivial partition $B = B_1 \cup B_2$ without edges between the two parts.  Every point $x \in E \setminus B$ would have $\supp_B(x) \subset B_1$ or $\supp_B(x) \subset B_2$.  This induces a partition of $E$ into two sets $E_1$ and $E_2$.  In particular, $E_1 \subset \aff (B_1)$ and $E_2 \subset \aff (B_2)$.

This means that no edge of $\mathcal{H}(S)$ meets both $E_1$ and $E_2$, contradicting the fact that $E$ was a connected component of $\mathcal{H}(S)$.  Therefore, $G$ is connected.

Let $T$ be a spanning tree of $G$, and let $X = \{x_{uv}: uv \mbox{ is an edge of }T\}$.  Since $T$ has $|B|-1=r$ edges, $|X| \le r$.  Now let $S' = B \cup X$.  We have $|S'|\le 2r+1 \le 2d+1$.

If we look at $\langle S' \rangle_{CS}$, for every two points $u,v \in B$, we have that $\{x_{uv}\}\cup \supp_B(x_{uv})$ is a minimal affine dependence that contains both $u$ and $v$.  Therefore, $T \subset \mathcal{H}(S')$, and in particular $B$ must be in a single connected component of $\mathcal{H}(S')$.  This finishes the proof as
\[
p \in \conv (I) \subset \conv (B) \subset \langle S' \rangle_{CS}.
\]
\end{proof}

The result above is optimal, since we can take $S = \{0,e_1,\dots,e_d,2e_1,\dots,2e_d\}$, where $e_1,\dots,e_d$ are the canonical basis of $\rr^d$.  We can also take $p = (e_1+\dots+e_d)/2d$.  For this example, we have $|S|=2d+1$ and $p \in \langle S \rangle_{CS} = \conv(S)$.  However, for every $s \in S$ we can verify that $p \not\in \langle S \setminus \{s\}\rangle_{CS}$.

Now we are ready to prove \cref{thm:tverberg-lp}.

\begin{proof}
We construct $A_1,\dots,A_r$ inductively.  We claim we can find a point $p$ and $A_1 \subset S$ such that $p \in \langle A_1 \rangle_{CS}$ and $|A_1| \le 2d+1$. By \cref{lem:centerpoint-lp}, we can choose $p$ with the stated centerpoint property. Since $S$ itself satisfies $|S|>\left(1-\frac1{2d+1}\right)|S|$, we get $p\in \langle S\rangle_{CS}$. By \cref{lem:caratheodory-lp}, there exists $A_1\subset S$ such that $p\in \langle A_1\rangle_{CS}$ and $|A_1|\le 2d+1$.

For the inductive step, suppose we have constructed $j$ subsets $A_1,\dots, A_j \subset S$ that are pairwise disjoint such that $p \in \langle A_i \rangle_{CS}$ and such that $|A_i| \le 2d+1$ for all $i$.  Let $S' = S \setminus (A_1 \cup \dots \cup A_j)$.  Then, as long as $j< r$, we have
\begin{align*}
    |S'|=|S \setminus (A_1 \cup \dots \cup A_j)| \ge |S| - (r-1)(2d+1) > \left(1-\frac{1}{2d+1}\right)|S|.
\end{align*}
Therefore, $p \in \langle S'\rangle_{CS}$.  By \cref{lem:caratheodory-lp}, there exists a set $A_{j+1} \subset S'$ such that $p \in \langle A_{j+1}\rangle_{CS}$ and $|A_{j+1}|\le 2d+1$.  We can continue this process until we have $r$ sets.

Finally, distribute the remaining points of \(S\) arbitrarily among the sets
\(A_1,\ldots,A_r\). Since the convex splinter hull is monotone, the common
point \(p\) remains in all \(r\) hulls.
\end{proof}

\section{Applications}\label{sec:applications}

\subsection{Flat transversals}

We briefly explain how Theorem~\ref{thm:colorful-fractional-cs} can be interpreted as a fractional Hadwiger-type theorem for flat transversals to convex sets. This is the geometric motivation that led Arocha, Bracho, and Montejano to work with convex splinters \cite{Arocha2007}.  We refer the reader to the paper by Arocha et al. for the precise details of their reduction.

The key observation is that, even though there is no Helly-type theorem for $m$-flats transversals to convex sets, if we fix some anchor sets $K_0,\dots, K_{m+1}$ that the transversal must meet, then such theorems exist.

Let \(m,n\ge 0\). Suppose that
\[
K_0,K_1,\ldots,K_{m+1}
\]
are \(n\)-dimensional convex sets whose affine hulls are in sufficiently general position (i.e., their union is a linear partition). The \(m\)-flats meeting all of these \(m+2\) sets form a ruling. After choosing one of the admissible order-type regions of this ruling, the relevant transversal \(m\)-flats can be parametrized by an \(n\)-dimensional affine space, say \(X\).

Now let \(K\) be another \(n\)-dimensional convex set such that the union of $K, K_0,\dots,K_{m+1}$ is a convex splinter. Consider the set
\[
C(K)\subset X
\]
of parameters \(x\in X\) such that the corresponding \(m\)-flat \(Y_x\) meets \(K\) with the prescribed order-type compatibility. Arocha, Bracho, and Montejano showed that \(C(K)\) is a convex splinter in \(X\). Thus a collection of convex sets
\[
K^{(1)},\ldots,K^{(r)}
\]
has a common compatible transversal \(m\)-flat, together with the fixed base sets \(K_0,\ldots,K_{m+1}\), exactly when
\[
C(K^{(1)})\cap \cdots \cap C(K^{(r)})\neq \emptyset.
\]
In other words, the transversal problem becomes an intersection problem for convex splinters in the \(n\)-dimensional parameter space \(X\).

Applying Theorem~\ref{thm:colorful-fractional-cs} gives the following fractional colorful transversal theorem.

\begin{corollary}[Colorful fractional theorem for compatible flat transversals]
Let \(m,n\ge 0\), and fix \(m+2\) convex sets \(K_0,K_1,\ldots,K_{m+1}\) of dimension \(n\) whose union is a convex splinter, together with an admissible order type for the corresponding transversal \(m\)-flats.

Let \(\ff_1,\ldots,\ff_{n+1}\) be finite families of additional \(n\)-dimensional convex sets such that the union of $K_0,\dots,K_{m+1}, F_i$ is a convex splinter for all $F_i \in \ff_i$. Suppose that at least
\(\alpha\prod_{i=1}^{n+1}|\ff_i|\)
colorful choices
\(
L_i\in \ff_i\), for \(i=1,\ldots,n+1,\)
have a compatible transversal \(m\)-flat meeting \(K_0,K_1,\ldots,K_{m+1},L_1,\ldots,L_{n+1}\).
Then, there exist an index \(i\in[n+1]\), a subfamily
\(\mathcal{G}_i\subset \ff_i\)
with \(|\mathcal G_i|\ge \beta|\ff_i|\),
where \(\beta=\beta(\alpha,n)>0\), and a single \(m\)-flat \(Y\) such that \(Y\) meets every set in
\(\{K_0,K_1,\ldots,K_{m+1}\}\cup \mathcal{G}_i\)
with the prescribed order-type compatibility.
\end{corollary}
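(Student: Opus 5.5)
This corollary follows from \cref{thm:colorful-fractional-cs} once the transversal problem is translated into an intersection problem for convex splinters. The translation uses the Arocha--Bracho--Montejano parametrization described above.

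First fix the anchor sets $K_0,\dots,K_{m+1}$ and the admissible order type. Identify the corresponding family of compatible transversal $m$-flats with an $n$-dimensional affine space $X\cong\rr^n$, writing $x\mapsto Y_x$.

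For each $F\in\ff_i$, the union of $K_0,\dots,K_{m+1},F$ is a convex splinter by hypothesis. By the result of Arocha et al.\ quoted above, the parameter set $C(F)\subset X$ of flats $Y_x$ that meet $F$ compatibly is therefore a convex splinter in $X$. Set $\ff_i'=\{C(F):F\in\ff_i\}$, viewed as multisets so that $|\ff_i'|=|\ff_i|$ even if two sets produce the same $C(F)$.

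Next, transfer the hypothesis. A colorful choice $L_1,\dots,L_{n+1}$ has a compatible transversal $m$-flat meeting all anchors and all $L_i$ exactly when some $x\in X$ lies in $C(L_1)\cap\dots\cap C(L_{n+1})$. So at least $\alpha\prod_i|\ff_i'|$ colorful $(n+1)$-tuples of $\ff_1',\dots,\ff_{n+1}'$ have non-empty intersection.

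Now apply \cref{thm:colorful-fractional-cs} with $d=n$. It gives an index $i$ and a subfamily $\mathcal G_i'\subset\ff_i'$ with $|\mathcal G_i'|\ge\beta(\alpha,n)|\ff_i'|$ and a point $x\in\bigcap_{C\in\mathcal G_i'}C$. Let $\mathcal G_i$ be the corresponding sets in $\ff_i$. The flat $Y=Y_x$ meets $K_0,\dots,K_{m+1}$ because it belongs to the parametrized ruling. It meets every member of $\mathcal G_i$ with the prescribed compatibility because $x\in C(F)$ for each $F\in\mathcal G_i$. The constant $\beta$ depends only on $\alpha$ and $n$, as required.

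The main point to handle carefully is the translation step. One must check that the equivalence ``compatible transversal exists $\iff$ $\bigcap C(L_i)\neq\emptyset$'' holds for a single fixed order-type region, so that every flat in that region meets the anchors in the prescribed way. One must also check that the convex-splinter property of $C(F)$ needs only the pairwise hypothesis on $F$ together with the anchors, and not on several added sets at once. Both facts are taken from Arocha et al.\ as stated above. With them in hand, the rest is a direct invocation of \cref{thm:colorful-fractional-cs}.
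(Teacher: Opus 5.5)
Your proposal is correct and takes the same route as the paper. The paper also uses the Arocha--Bracho--Montejano parametrization $x\mapsto Y_x$ of a fixed order-type region by $X\cong\rr^n$ to turn the transversal problem into an intersection problem for the convex splinters $C(F)\subset X$, and then applies \cref{thm:colorful-fractional-cs} with $d=n$; your multiset bookkeeping and your remark that both translation facts come from Arocha et al.\ only spell out what the paper leaves implicit.
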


For the state of the art regarding $m$-flat transversals to convex sets, see \cite{McGinnis2026} and the references therein.

\subsection{Eigenvectors for linear maps}

A natural way to find linear partitions is using eigenspaces of linear functions in $\rr^d$.

Let $L : \rr^d \to \rr^d$.  Let $E$ be the set of eigenvectors of $L$.  For a random hyperplane $A$ that does not contain the origin, $A \cap E$ will be a linear partition in $A$.  This was observed by Polyanskii \cite{polyanskii2016helly}.

A direct application of \cref{coro:frac-splinters} gives the following.

\begin{corollary}
    Let $d$ be a positive integer.  For every $\alpha>0$ there exists $\beta=\beta(\alpha,d)>0$ such that the following holds.  Let $L_1,\dots, L_n$ be linear maps from $\rr^d$ to $\rr^d$.  Suppose that at least $\alpha \binom {n}{d}$ of the $d$-tuples of linear maps share an eigenvector.  Then, there is a subfamily of at least $\beta n$ linear maps that share an eigenvector.
\end{corollary}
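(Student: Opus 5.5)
The plan is to reduce to \cref{coro:frac-splinters} in dimension $d-1$ by projectivizing the eigenvector sets, following Polyanskii's observation. For each $i\in[n]$, let $E_i\subset\rr^d$ be the union of the real eigenspaces $\ker(L_i-\lambda I)$ over the real eigenvalues $\lambda$ of $L_i$. A nonzero vector is an eigenvector of $L_i$ exactly when it lies in $E_i\setminus\{0\}$. The maps in a subfamily $I\subset[n]$ share an eigenvector if and only if there is a choice of eigenvalues $\lambda_i$, $i\in I$, such that the subspace $W=\bigcap_{i\in I}\ker(L_i-\lambda_iI)$ is nonzero.

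\textbf{Choosing the hyperplane.} Only finitely many eigenspaces occur among all the $L_i$, so only finitely many nonzero subspaces $W$ arise as such intersections. Choose a linear hyperplane $H_0$ that contains none of these nonzero subspaces; a generic $H_0$ works, since each condition $W\not\subset H_0$ is open and dense. Let $A=H_0+v$ for some $v\notin H_0$, and identify $A$ with $\rr^{d-1}$. A nonzero subspace $W$ meets $A$ if and only if $W\not\subset H_0$. Hence, for every subfamily $I$, the maps $L_i$ with $i\in I$ share an eigenvector if and only if $\bigcap_{i\in I}(E_i\cap A)\neq\emptyset$.

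\textbf{The sets $C_i=E_i\cap A$ are convex splinters in $A$.} Each eigenspace meets $A$ in an affine subspace, possibly empty, so $C_i$ is a finite union of affine subspaces of $A$. Eigenspaces of $L_i$ for distinct eigenvalues form a direct sum: the union of linearly independent sets taken from different eigenspaces is linearly independent. For points of $A$, affine independence is equivalent to linear independence of the vectors, because $A$ is a hyperplane missing the origin. Therefore the union of affinely independent sets taken from the different pieces $\ker(L_i-\lambda I)\cap A$ is affinely independent, so $C_i$ is a linear partition of $A$. In particular, it is a convex splinter, being a union of convex sets with the required independence property. If $L_i$ is a scalar map, then $C_i=A$, which is still a convex splinter.

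\textbf{Applying the corollary.} By the hypothesis and the equivalence above, at least $\alpha\binom{n}{d}$ of the $d$-tuples of $\{C_1,\dots,C_n\}$ intersect. Since $d=(d-1)+1$, \cref{coro:frac-splinters} applied in $A\cong\rr^{d-1}$ gives $\beta=\beta'(\alpha,d-1)>0$ and a subfamily of at least $\beta n$ of the $C_i$ with a common point. Translating back, the corresponding maps share an eigenvector.

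The only delicate step is the choice of $A$. A common eigenvector direction must not be lost by lying parallel to $A$, and this is why $H_0$ is chosen to avoid the finitely many intersection subspaces rather than chosen at random with respect to individual eigenvectors. The case $d=1$ is trivial, since every vector is an eigenvector of every map.
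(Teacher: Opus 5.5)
Your proposal is correct and follows the paper's route: you intersect each eigenvector set with an affine hyperplane $A$ that misses the origin (Polyanskii's observation), which gives linear partitions of $A\cong\rr^{d-1}$, and then apply \cref{coro:frac-splinters} in dimension $d-1$, where the relevant tuple size is $(d-1)+1=d$. You also supply details the paper leaves implicit: why $E_i\cap A$ is a linear partition for every such $A$, and why $A$ must be chosen so that none of the finitely many common eigenspaces is parallel to it, which is what makes intersections of the $E_i\cap A$ correspond exactly to shared eigenvectors.
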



\end{document}